\newtheorem {Th}{Theorem}
\newtheorem {Le}[Th]{Lemma}
\newtheorem {Pro}[Th]{Proposition}
\theoremstyle{remark}
\newtheorem*{Rem}{Remarks}
\newtheorem*{Rem1}{Remark}
\theoremstyle{definition}
\newtheorem{Def}[Th]{Definition}
\newcommand\N{\mathbb N}
\newcommand\Q{\mathbb Q}
\newcommand\R{\mathbb R}
\newcommand\Z{\mathbb Z}
\newcommand{\NSpd}{\hspace{-3pt}}
\newcommand\mc{\mathcal}
\newcommand\eps{\varepsilon}
\def\phi{\varphi}
\begin{document}

\title{Atoms of root-closed submonoids of $\Z^2$}

\author{Günter Lettl}

\address{\hspace{-11pt}Institut für Mathematik und wissenschaftliches Rechnen\newline
Karl-Franzens-Universität\newline 
Heinrichstraße 36\newline 
A-8010  Graz, AUSTRIA}
\email{guenter.lettl@uni-graz.at}

\subjclass[2000]{Primary 20M13, secondary 13A05, 20M14, 13F15}
\keywords{root-closed monoid, factorization into atoms, continued fraction, convergent}
\date{April 27, 2022}

\begin{abstract}
We describe how one can explicitly obtain all atoms of an arbitrary root-closed monoid, whose quotient group is isomorphic to $\Z^2$. For this purpose, we solve this task for three special types of such monoids in Theorems \ref{Th5} and \ref{Th6}, and then transfer these results to the general case. It turns out that all atoms can be obtained from the (regular) continued fraction expansion of the slopes of the bounding rays of the cone, which is spanned by the monoid.
\end{abstract}

\maketitle

\section{Introduction}

Factorization theory has a long and rich history. In the course of time one realized that many factorization problems in integral domains only depend on the multiplicative structure, so many investigations and problems were translated into the theory of monoids, i.e. commutative, cancellative semigroups with a neutral element. For an overview of the relevant literature, we refer the reader to books and conference proceedings, e.g. \cite{N}, \cite{A}, \cite{Ch}, \cite{G-HK}, \cite{F-F-G-T-Z}, \cite{F-F-G-O}. 

Additive submonoids of $\Z^k$ found special attention since they have applications and connections to various areas of mathematics. In particular, they are of crucial importance in $K$-theory \cite{B-G} and toric geometry \cite{C-L-S}. 

Throughout this paper, $H$ denotes a monoid and $\mc Q (H)$ its quotient group. We will use the notations and definitions as given in \cite{G-HK}, but use addition for the operation of $H$. Let $0_H$ denote the neutral element of $H$, and $H \subset \mc Q(H) = \{h_1 - h_2 \mid h_1, h_2 \in H\}$. Furthermore, let $\mc A (H)$ denote the set of atoms of $H$. This set is fundamental to study the arithmetic and many factorization properties of $H$. The monoid $H$ is called \textit{root-closed}, if for any $x \in \mc Q(H)$ and any $n \in \N$ we have that $nx \in H$ implies $x \in H$.

The arithmetic of additive monoids with finitely generated quotient group attracted a lot of research interest in recent time, see for example \cite{F-P-U}, \cite{DA-GS-M-T}, \cite{C-F-P-U}, \cite{GS-ON-W}, \cite{Go1}, \cite{Go2}, \cite{GB-GM-MC}.
Root-closed monoids with rank at most two were investigated in \cite{C-O}, and the arithmetic of strongly primary monoids was studied in \cite{G-G-T}. Finally, it was shown in \cite{L-O-W} that the theory of root-closed monoids is axiomatizable, but not finitely axiomatizable.
\medskip

Our main interest will be in monoids $H$ with $\mc Q (H) \simeq \Z^2$, so we will tacitly assume that $H$ has rank $2$ and $H \subset \Z^2$, and put $0_H = O = (0,0)$. In this paper we will explicitly describe how one obtains the set of atoms $\mc A (H)$, supposing that $H$ is root-closed. It turns out that the atoms are closely related to the convergents and second convergents of the (regular) continued fraction expansions of the slopes of the rays bounding the cone of $H$. For this purpose we will recall results and notations from the theory of continued fraction expansions of real numbers in Section 2. 

In Section 3 we will prove the main result for special monoids, which consist of all elements of $\N_0^2$, which lie between the $x$-axis and a line with positive slope $\alpha$. It turns out that the atoms of such monoids can be determined from the continued fraction expansion of $\alpha$. The monoids $M_{\alpha, >0}^\circ$ (as defined in Definition \ref{D4}) show up e.g. in Example 4.7 in \cite{G-H}. The monoids $M_{5/2}$ and $M_5$ appear in Propositions 6.3 and 6.4 in \cite{B-R}. 
The monoids $M_\alpha$ with $\alpha = (e \pm \sqrt{e^2 -4})/2$, $3 \le e \in \N$, are tightly related to the monoids in Appendix B of \cite{A-G-W}, also referred to in Example 3.4 in \cite{Go2}. The atoms of those monoids were determined with some ingenious method, but this cannot be applied to arbitrary $\alpha$, as can be seen from our present results.

Finally, in Section 4 we will use the results of Section 3 to describe the atoms of any root-closed monoid $H \subset \Z^2$ with $\mc Q (H) = \Z^2$ in Theorem \ref{Th10}. This answers an open problem posed by F. Gotti, Question 3.5 in \cite{Go2}. 
\bigskip

\section{Notations and results for continued fractions}

For more detailed information and proofs about continued fraction expansions the reader may consult \cite{Bor} or \cite{Kh}. 

Each irrational $\alpha \in \R$ has a unique (regular) continued fraction expansion\newline $\alpha = [a_0; a_1, a_2, \dots]$ with $a_0 \in \Z$ and $a_i \in \N = \{1,2,3,\dots\}$ for all $i \in \N$. Each rational $\alpha \in \Q$ has a unique continued fraction expansion $\alpha = [a_0; a_1, a_2, \dots, a_N]$ with \textit{even} $N \in \N_0 = \N \cup \{0\}$, $a_0 \in \Z$ and $a_i \in \N$ for all $1 \le i \le N$. In the following we will tacitly assume that if $\alpha$ is rational the indices $n$ will always be bounded above such that all appearing partial quotients $a_n$ are defined.

For given $\alpha \in \R$ with continued fraction expansion as above, the numbers $p_n$, $q_n$ (with $n \le N$, if $\alpha$ is rational) are given recursively by
\[p_{-2}=0,\ p_{-1}=1,\ p_n=a_np_{n-1}+p_{n-2} \in \Z, \vspace{-6pt}
\]
\[q_{-2}=1,\ q_{-1}=0,\ q_n=a_nq_{n-1}+q_{n-2} \in \N\ \text{ for } n \ge 0.
\]
Then the following relations hold:
\begin{equation}\label{eq1}
\text{for } 0 \le n\NSpd :\ \ [a_0;a_1, \dots, a_n] = \dfrac {p_n} {q_n},
\end{equation}
\begin{equation}\label{eq2}
\text{for } -1 \le n\NSpd :\ \ p_n q_{n-1} - p_{n-1} q_n = (-1)^{n-1}\quad
\text{and for } 1 \le n\NSpd :\ \ \dfrac {p_n} {q_n} - \dfrac {p_{n-1}}{q_{n-1}} = \dfrac {(-1)^{n-1}} {q_n q_{n-1}}\,,
\end{equation}
\begin{equation}\label{eq3}
\text{for } 0 \le n\NSpd :\ \ p_n q_{n-2} - p_{n-2} q_n = (-1)^n a_n\quad
\text{and for } 2 \le n\NSpd :\ \ \dfrac {p_n} {q_n} - \dfrac {p_{n-2}}{q_{n-2}} = \dfrac {(-1)^{n} a_n} {q_n q_{n-2}}\,.
\end{equation}
The rational number $p_n/q_n$ is called the \textit{$n$-th convergent} to $\alpha$.
\medskip

\begin{Def}\label{D1}
With the above notations we define for $-2 \le n$ and\footnote{For $n=-2$ and negative $\alpha$ take $a_0 \le i \le 0$.} $0 \le i \le a_{n+2}$
\begin{equation}\label{eq4}
p_{n,i} = p_n + ip_{n+1}\quad \text{and}\quad q_{n,i} = q_n + iq_{n+1}\ .
\end{equation}
For $1 \le i \le a_{n+2}-1$ the rational numbers $\dfrac {p_{n,i}} {q_{n,i}}$ are called the \textit{second convergents} to $\alpha$. (Note that $q_{n,i}=0$ if and only if $(n,i)=(-1,0)$.)
\end{Def}
\smallskip

\begin{Rem1}
The second convergents (in German: ``Nebennäherungsbrüche") are called by Khinchin \cite{Kh} ``intermediate fractions".
\end{Rem1}
\medskip

We will often make use of the following well-known lemma (see e.g. Lemma 1.41 in \cite{Bor}):
\begin{Le}\label{L2}
Let $x_1, x_2 \in \N$ and $y_1, y_2 \in \Z$. Then 
\[ \frac{y_1}{x_1} < \frac{y_2}{x_2}\ \Longrightarrow\ \frac{y_1}{x_1} <  \frac{y_1+y_2}{x_1+x_2} < \frac{y_2}{x_2}.
\]
Obviously, ``$\le$" in the left inequality implies twice ``$\le$" in the right hand side of the above implication.
\end{Le}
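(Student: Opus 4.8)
The plan is to reduce the whole statement to the single scalar inequality $y_1 x_2 < y_2 x_1$, which is what the hypothesis becomes after clearing denominators. Since $x_1, x_2 \in \N$ are positive, multiplying $\frac{y_1}{x_1} < \frac{y_2}{x_2}$ by $x_1 x_2 > 0$ is order-preserving and yields exactly $y_1 x_2 < y_2 x_1$. Note also that $x_1 + x_2 \in \N$, so the mediant $\frac{y_1+y_2}{x_1+x_2}$ is a well-defined rational number and cross-multiplication by $x_1+x_2$ is again order-preserving.

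For the left inequality of the conclusion I would compare $\frac{y_1}{x_1}$ with $\frac{y_1+y_2}{x_1+x_2}$ by cross-multiplying with the positive quantity $x_1(x_1+x_2)$; the relevant difference is
\[
(y_1+y_2)x_1 - y_1(x_1+x_2) = y_2 x_1 - y_1 x_2 > 0,
\]
so $\frac{y_1}{x_1} < \frac{y_1+y_2}{x_1+x_2}$. For the right inequality I would compare $\frac{y_1+y_2}{x_1+x_2}$ with $\frac{y_2}{x_2}$ the same way, using the positive quantity $x_2(x_1+x_2)$; here the difference is
\[
y_2(x_1+x_2) - (y_1+y_2)x_2 = y_2 x_1 - y_1 x_2 > 0,
\]
giving $\frac{y_1+y_2}{x_1+x_2} < \frac{y_2}{x_2}$. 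The closing remark follows because replacing $<$ by $\le$ in the hypothesis turns the strict inequality $y_2 x_1 - y_1 x_2 > 0$ into $y_2 x_1 - y_1 x_2 \ge 0$, and the two displayed computations then give $\ge$ in both places.

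There is essentially no obstacle here: the only point requiring care is that every denominator in sight ($x_1$, $x_2$, and $x_1+x_2$) is a \emph{positive} integer, so that each cross-multiplication preserves the direction of the inequality; this is precisely the role of the hypothesis $x_1, x_2 \in \N$.
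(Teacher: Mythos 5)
Your argument is correct and complete: clearing denominators to $y_1x_2 < y_2x_1$ and then cross-multiplying against the positive quantities $x_1(x_1+x_2)$ and $x_2(x_1+x_2)$ is exactly the standard proof of the mediant inequality, and your handling of the ``$\le$'' variant is also fine. The paper itself gives no proof of this lemma, only a citation (Lemma 1.41 in \cite{Bor}), and your computation is essentially the argument found there, so there is nothing to add.
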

\bigskip

\begin{Le}\label{L3}
With the notations of Definition \ref{D1} we have:
\smallskip

\textbf{a)} for all $-2 \le n$ and $0 \le i \le a_{n+2} :$ \ \ $\gcd(p_{n,i}, q_{n,i})=1$.
\smallskip

\textbf{b)} for all even $n \ge 0$
\begin{equation}\label{eq5}
\dfrac {p_n}{q_n} = \dfrac {p_{n,0}}{q_{n,0}} < \dfrac {p_{n,1}}{q_{n,1}} < \dfrac {p_{n,2}}{q_{n,2}} < \dots < \dfrac {p_{n,a_{n+2}}}{q_{n,a_{n+2}}} = \dfrac {p_{n+2}}{q_{n+2}} \le \alpha\ .
\end{equation}
If $\alpha > 0$ we additionally have
\[0 = \dfrac {p_{-2}}{q_{-2}} = \dfrac {p_{{-2},0}}{q_{{-2},0}} < \dots < \dfrac {p_{{-2},i}}{q_{{-2},i}} = i < \dots < \dfrac {p_{{-2},a_0}}{q_{{-2},a_0}} = \dfrac {p_{0}}{q_{0}} = a_0\ .
\]
\end{Le}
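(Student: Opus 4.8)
The plan is to reduce everything to the basic determinant identities \eqref{eq2}, applied to the pairs $(p_{n,i},q_{n,i})$ in place of consecutive convergents. For part \textbf{a)} I would simply compute, using \eqref{eq4},
\[
p_{n,i}\,q_{n+1} - q_{n,i}\,p_{n+1} = (p_n + ip_{n+1})q_{n+1} - (q_n + iq_{n+1})p_{n+1} = p_n q_{n+1} - q_n p_{n+1} = -(-1)^n = (-1)^{n+1},
\]
where the last step is \eqref{eq2} with $n$ replaced by $n+1$ (legitimate since $n+1\ge-1$, and since $a_{n+2}$ already appears we have $n+1\le N-1$ in the rational case). Hence $\pm1$ is an integral linear combination of $p_{n,i}$ and $q_{n,i}$, so $\gcd(p_{n,i},q_{n,i})=1$; in the lone case $(n,i)=(-1,0)$, where $q_{n,i}=0$, this just reconfirms $p_{-1,0}=p_{-1}=1$.

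For the chain of strict inequalities in part \textbf{b)}, first note that for even $n\ge0$ every denominator $q_{n,i}=q_n+iq_{n+1}$ is positive (because $q_n,q_{n+1}\ge1$), so the sign of a difference of two such fractions is the sign of the corresponding determinant. Since $p_{n,i+1}=p_{n,i}+p_{n+1}$ and $q_{n,i+1}=q_{n,i}+q_{n+1}$, the analogous computation yields
\[
p_{n,i+1}\,q_{n,i} - p_{n,i}\,q_{n,i+1} = p_{n+1}q_{n,i} - p_{n,i}q_{n+1} = p_{n+1}q_n - p_n q_{n+1} = (-1)^n = 1
\]
for even $n$, whence $\frac{p_{n,i}}{q_{n,i}}<\frac{p_{n,i+1}}{q_{n,i+1}}$ for $0\le i\le a_{n+2}-1$. (Alternatively, since $\frac{p_n}{q_n}<\frac{p_{n+1}}{q_{n+1}}$ by \eqref{eq2} for even $n$, one can induct on $i$: each $\frac{p_{n,i+1}}{q_{n,i+1}}$ is the mediant of $\frac{p_{n,i}}{q_{n,i}}$ and $\frac{p_{n+1}}{q_{n+1}}$, so Lemma \ref{L2} places it strictly between the two, which also keeps the induction hypothesis alive.)

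It remains to identify the endpoints and to get the final bound. From \eqref{eq4} and the defining recursions, $\frac{p_{n,0}}{q_{n,0}}=\frac{p_n}{q_n}$ and $\frac{p_{n,a_{n+2}}}{q_{n,a_{n+2}}}=\frac{p_n+a_{n+2}p_{n+1}}{q_n+a_{n+2}q_{n+1}}=\frac{p_{n+2}}{q_{n+2}}$, while $\frac{p_{n+2}}{q_{n+2}}\le\alpha$ is the classical fact that even-indexed convergents never exceed $\alpha$ (with equality only when $\alpha\in\Q$ and $n+2$ is its last index), see \cite{Bor}, \cite{Kh}. For the extra line when $\alpha>0$ one has $a_0=\lfloor\alpha\rfloor\ge0$, and since $(p_{-2},p_{-1})=(0,1)$, $(q_{-2},q_{-1})=(1,0)$, \eqref{eq4} gives $p_{-2,i}=i$, $q_{-2,i}=1$, so $\frac{p_{-2,i}}{q_{-2,i}}=i$ for $0\le i\le a_0$; this is trivially strictly increasing from $0=\frac{p_{-2}}{q_{-2}}$ to $a_0=\frac{p_0}{q_0}$ (using $p_0=a_0p_{-1}+p_{-2}=a_0$, $q_0=a_0q_{-1}+q_{-2}=1$).

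I do not expect a genuine obstacle here: the whole lemma falls out of the identities in \eqref{eq2}. The only points needing a moment's care will be the positivity of the denominators $q_{n,i}$ in the relevant ranges (so that the sign of a determinant controls the ordering of fractions), the harmless boundary case $(n,i)=(-1,0)$ in part \textbf{a)}, and invoking the standard inequality $p_{n+2}/q_{n+2}\le\alpha$ for even indices, which is why continued-fraction facts were collected in this section.
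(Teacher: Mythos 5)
Your proposal is correct and takes essentially the same route as the paper: part \textbf{a)} is the identical determinant computation $p_{n,i}q_{n+1}-q_{n,i}p_{n+1}=\pm1$ from \eqref{eq2}, and for part \textbf{b)} your cross-determinant calculation is just an unwound version of the paper's inductive mediant argument via Lemma \ref{L2} (which you even note as an alternative), with the endpoints and the bound $p_{n+2}/q_{n+2}\le\alpha$ handled by the same standard continued-fraction facts and the $n=-2$ case read off directly from the definitions.
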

\bigskip

\begin{proof}
\textbf{a)} Using Definition \ref{D1} and \eqref{eq2} yields $p_{n,i}\, q_{n+1} - q_{n,i}\, p_{n+1} = \pm 1$. 
\smallskip

\textbf{b)} For even $n \ge 0$ we have $\dfrac {p_n} {q_n} < \alpha < \dfrac {p_{n+1}} {q_{n+1}}$. Using Lemma \ref{L2} we obtain
\smallskip

\noindent $\dfrac {p_n} {q_n} < \dfrac {p_{n,1}}{q_{n,1}} < \dfrac {p_{n+1}} {q_{n+1}}$ and inductively 
$\dfrac {p_{n,i}} {q_{n,i}} < \dfrac {p_{n,i+1}}{q_{n,i+1}} < \dfrac {p_{n+1}} {q_{n+1}}$ for all $1 \le i < a_{n+2}$.
\smallskip

\noindent The case $\alpha > 0$ and $n=-2$ follows directly from the definitions.
\end{proof}
\smallskip

The above results can be found e.g. in \cite{Kh}, Ch. I.4.
\bigskip

\section{The monoid $M_{\alpha}$ and its relatives}

Now we investigate monoids which consist of all elements of $\N_0^2$, which lie between the $x$-axis and a line with positive slope $\alpha$. We will distinguish whether the points of $\N_0^2$ on the bounding rays belong to the monoid or not, and so obtain four cases. 
In Theorems \ref{Th5} and \ref{Th6} we will explicitly give the atoms of three typical types of such monoids. The results depend on whether $\alpha$ is irrational or rational, and in the latter case also whether the points on the line with slope $\alpha$ belong to the monoid or not. From these results we will deduce a description of the atoms for $M_{\alpha, >0}$ and $M_{\alpha, >0}^\circ$ in Theorem \ref{Th7}, and finally in section 4 for any root-closed monoid $H$ with $\mc Q(H)=\Z^2$.
\medskip

\begin{Def}\label{D4}
For $0 < \alpha \in \R$ we define the following special monoids
\[M_\alpha = \{(x,y) \in \Z^2 \mid 0 \le y \le \alpha x\}\,,
\]
\[M_\alpha^\circ = \{(x,y) \in \Z^2 \mid 0 \le y < \alpha x\} \cup \{O\}\,,
\]
\[M_{\alpha, >0} = \{(x,y) \in \Z^2 \mid 0 < y \le \alpha x\} \cup \{O\}\,,
\]
\[M_{\alpha, >0}^\circ = \{(x,y) \in \Z^2 \mid 0 < y < \alpha x\} \cup \{O\}\,,
\]
where $O=(0,0)$, as before.
\end{Def}

\begin{Rem1} All the monoids of Definition \ref{D4}, and more generally, any submonoid of $\N_0^d$, are FF-monoids, i.e. each element is the sum of finitely many atoms. This is Proposition~4.2 in \cite{Go1}, and the proof is very immediate.
\end{Rem1}
\medskip

The atoms of $M_\alpha$ are explicitly given in the following 
\begin{Th}\label{Th5}
Let $0 < \alpha \in \R$ and $\alpha = [a_0; a_1, a_2, \dots]$ be its continued fraction expansion. If $\alpha \in \Q$, we choose the expansion $\alpha = [a_0; a_1, a_2, \dots, a_N]$ with even $N \ge 0$.\newline 
Using the notations of the previous section, we put for all even $n \ge -2$ and $0 \le i \le a_{n+2}$ (and $n \le N-2$, if $\alpha$ is rational)
\[A_{n,i} = (q_{n,i}, p_{n,i})\,.\vspace{4pt}
\]

\textbf{a)} If $\alpha$ is irrational, then the set of atoms of $M_\alpha$ is given by
\[\mc A (M_\alpha) = \{ A_{n,i} \mid -2 \le n \text{ even and } 0 \le i \le a_{n+2} \}\,.\vspace{4pt}
\]

\textbf{b)} If $\alpha$ is rational, then $M_\alpha$ is a finitely generated Krull monoid with\newline 
$1 + a_0 + a_2 + a_4 + \dots + a_N$ atoms, and its set of atoms is given by
\[\mc A (M_\alpha) = \{ A_{n,i} \mid -2 \le n \le N-2,\ n \text{ even and } 0 \le i \le a_{n+2} \}\,.
\]
\end{Th}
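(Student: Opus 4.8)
The set $M_\alpha$ is the set of lattice points in the closed cone $C = \{(x,y) \mid x \ge 0,\ 0 \le y \le \alpha x\}$, so it is the intersection of a rational polyhedral (or, for irrational $\alpha$, merely convex) cone with $\Z^2$. The atoms of such a positive monoid are exactly the lattice points on the boundary of the convex hull of the nonzero lattice points of the cone (the ``Newton polygon'' of the cone), i.e. the vertices of that lower convex hull together with the primitive lattice vectors along its edges. So the strategy is: (i) identify the points $A_{n,i}$ as precisely the primitive lattice points on the boundary of $\mathrm{conv}(M_\alpha \setminus \{O\})$ near the ``upper'' ray $y = \alpha x$; (ii) observe that the lower ray $y = 0$ contributes only the single atom $(1,0) = A_{-2,1}$ when $a_0 \ge 1$, or more precisely that the points $A_{-2,i}$ for $0 \le i \le a_0$ are the lattice points on the segment from $(1,0)$ along the $x$-axis up to $(1,a_0) = (q_0,p_0)$; and (iii) verify the two defining properties of an atom for each $A_{n,i}$ — that it lies in $M_\alpha$, and that it is not a sum of two nonzero elements of $M_\alpha$ — and that every nonzero element of $M_\alpha$ that is not one of the $A_{n,i}$ decomposes.

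The key technical input is Lemma \ref{L3}: the slopes $p_{n,i}/q_{n,i}$ increase monotonically, for even $n$, from $p_n/q_n$ up to $p_{n+2}/q_{n+2}$, approaching $\alpha$ from below, and by Lemma \ref{L3}a each $A_{n,i}$ is a primitive vector. First I would show each $A_{n,i} \in M_\alpha$: this is exactly the inequality $0 \le p_{n,i} \le \alpha\, q_{n,i}$, which follows from Lemma \ref{L3}b (with the edge case $p_{n+2}/q_{n+2} = \alpha$ handled in the rational case). Next, that $A_{n,i}$ is an atom: suppose $A_{n,i} = u + v$ with $u,v \in M_\alpha \setminus \{O\}$. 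Since $A_{n,i}$ is primitive, $u$ and $v$ are not parallel to it, so one of them, say $u = (b,c)$, has slope $c/b$ strictly larger and the other strictly smaller than $p_{n,i}/q_{n,i}$ (using Lemma \ref{L2} in reverse). The point with larger slope still lies in $M_\alpha$, hence has slope $\le \alpha$ (or $< \alpha$ at the boundary, in the rational-interior case — but here $M_\alpha$ is the closed monoid so $\le \alpha$), and has strictly smaller $x$-coordinate. I would then use equations \eqref{eq2} and \eqref{eq3} — the determinant relations $p_{n,i} q_{n+1} - q_{n,i} p_{n+1} = \pm 1$ and the consecutive-convergent relations — to show that there is simply no lattice point of $M_\alpha$ strictly between (in slope, and with smaller first coordinate) two consecutive $A_{n,i}$'s that could serve as such a summand; concretely, any lattice point of $M_\alpha$ with first coordinate $< q_{n,i}$ has slope $\le p_{n,i-1}/q_{n,i-1}$ or is itself some earlier $A_{m,j}$, and the complementary summand $v = A_{n,i} - u$ then falls outside the cone. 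Finally, for an arbitrary nonzero $(x,y) \in M_\alpha$ not of the form $A_{n,i}$, I would locate the unique pair of ``consecutive'' atoms $A,A'$ whose slopes sandwich $y/x$ (or whose common slope equals $y/x$, the parallel case), and subtract off the appropriate one to reduce $x$, giving a decomposition — formally an induction on $x$.

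For part b), once the set of atoms is pinned down, finite generation is immediate, and the Krull property follows because $M_\alpha$ is by construction the set of lattice points satisfying finitely many ``$\ge 0$'' linear inequalities with integer coefficients — i.e. $M_\alpha = \Z^2 \cap \{y \ge 0\} \cap \{\alpha x - y \ge 0\}$ with $\alpha = p_N/q_N$ rational, so $M_\alpha = \{v \in \Z^2 \mid \langle v, e_2\rangle \ge 0,\ \langle v, (p_N, -q_N)\rangle \ge 0\}$, which is a saturated (root-closed) submonoid of the free abelian group $\Z^2$, hence Krull by the standard characterization (Krull = root-closed + suitable divisor-theory condition, automatic here for finitely generated positive monoids in $\Z^2$; one can cite \cite{G-HK}). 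The atom count is then just the cardinality of the index set $\{(n,i) : -2 \le n \le N-2 \text{ even}, 0 \le i \le a_{n+2}\}$ modulo the identification of the shared endpoints $A_{n,a_{n+2}} = A_{n+2,0}$, which telescopes to $1 + a_0 + a_2 + \dots + a_N$.

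The main obstacle I anticipate is step (ii)–(iii): cleanly proving that nothing between consecutive $A_{n,i}$ is an atom and that the $A_{n,i}$ themselves are indecomposable. This is really the statement that the $A_{n,i}$ are exactly the vertices and primitive edge-vectors of the lower convex hull of $M_\alpha \setminus \{O\}$, and making that bookkeeping airtight — handling the transition between ``levels'' $n$ and $n+2$, the degenerate index $(n,i) = (-1,0)$ excluded in Definition \ref{D1}, and the boundary behaviour on $y = 0$ versus $y = \alpha x$ — is where the care is needed; the continued-fraction identities \eqref{eq2} and \eqref{eq3} do all the real work, but assembling them into a clean induction on the first coordinate is the delicate part.
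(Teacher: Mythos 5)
Your plan is sound and, for the positive direction (each $A_{n,i}$ is an atom), it is essentially the paper's argument: the paper first proves Lemma \ref{L8} (an element with $y_0>0$ is an atom of $M_\alpha$ iff it is a ``best approximation from below''), and then excludes a competitor $h=(x,y)$ with $x<q_{n,i}$ and $y/x>p_{n,i}/q_{n,i}$ by exactly the determinant estimate you intend, namely $\tfrac 1{q_{n+1}q_{n,i}}=\tfrac{p_{n+1}}{q_{n+1}}-\tfrac{p_{n,i}}{q_{n,i}}>\tfrac{p_{n+1}}{q_{n+1}}-\tfrac yx\ge\tfrac 1{xq_{n+1}}$, coming from \eqref{eq2}. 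Where you genuinely diverge is the converse direction. You propose to decompose explicitly every element that is not an $A_{n,i}$ by locating the two atoms whose slopes sandwich $y/x$ and subtracting one of them, with induction on $x$; the paper never decomposes anything: for a putative atom $A=(x_0,y_0)$ with $p_{n,i}<y_0\le p_{n,i+1}$ and $A\ne A_{n,i+1}$ it plays the two known atoms $A_{n,i}$, $A_{n,i+1}$ against Lemma \ref{L8} to get \eqref{eq8}, deduces $x_0\ge q_{n,i}+q_{n,i+1}$, and reaches the contradiction $p_{n,i}q_{n,i}<1$, with the residual case $p_{n,i}=0$ (i.e. $a_0=0$) settled by hand. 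Both routes work; the paper's avoids any membership verification for a difference of lattice points, which is precisely the step your sketch still owes an argument for: it is not automatic that $(x,y)-A_{n,i}$ lies in $M_\alpha$ after ``subtracting off the appropriate one''. The clean way to close it is the unimodular relation $p_{n,i+1}q_{n,i}-p_{n,i}q_{n,i+1}=1$ (even $n$, from \eqref{eq2}): if $p_{n,i}/q_{n,i}<y/x<p_{n,i+1}/q_{n,i+1}$, then $(x,y)=aA_{n,i}+bA_{n,i+1}$ with integers $a=p_{n,i+1}x-q_{n,i+1}y\ge 1$ and $b=q_{n,i}y-p_{n,i}x\ge 1$, so $(x,y)$ is manifestly not an atom (no induction needed); the remaining cases, $y/x$ equal to an atom slope, $y=0$, or $y/x=\alpha$ for rational $\alpha$, reduce to multiples of a primitive vector via Lemma \ref{L3}.a).

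Two smaller points. The opening principle ``the atoms are exactly the lattice points on the boundary of $\mathrm{conv}(M_\alpha\setminus\{O\})$'' is not literally correct (e.g.\ $(2,0)$ lies on that boundary), and even with your primitivity caveat it is a global statement you never prove; since your steps (i)--(iii) verify atomhood and decomposability directly, treat the Klein-polygon picture as motivation only and do not lean on it as a quotable fact. Also note the indexing slip $(1,0)=A_{-2,0}$ (not $A_{-2,1}$), and that the auxiliary claim you state for indecomposability should read: every $(x,y)\in M_\alpha$ with $0<x<q_{n,i}$ has $y/x\le p_{n,i-1}/q_{n,i-1}$ --- which is true and follows from the same two determinant inequalities, writing $(x,y)$ in the basis $A_{n,i-1}$, $(q_{n+1},p_{n+1})$. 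Part b) of your plan (Krull, finite generation, and the telescoping count using $A_{n,a_{n+2}}=A_{n+2,0}$) matches the paper.
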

\smallskip

\begin{Rem1} Note that in the above description of the atoms we have $A_{n, a_{n+2}} = A_{n+2,0}$, which is convenient for the proof. Up to this ambiguity, all elements in the above description of $\mc A (M_\alpha)$ are pairwise different, as follows immediately from Lemma \ref{L3}.b).
\end{Rem1}
\bigskip

For the monoids $M_\alpha^\circ$ we have to consider only rational $\alpha$, since for irrational $\alpha$ one has $M_\alpha^\circ = M_\alpha$.

\begin{Th}\label{Th6}
Let $0 < \alpha \in \Q$ and $\alpha = [a_0, a_1, a_2, \dots, a_N]$ be its continued fraction expansion with even $N \ge 0$. Using the notations of Theorem \ref{Th5}, we put for $k \in \N$
\[A_k = A_{N-2,a_N-1} + k A_{N,0} = (q_{N-2, a_N-1} + kq_N, p_{N-2, a_N-1} + kp_N) \ \text{ and }\ \mc A' = \{A_k \mid k \in \N\}\,.
\]
Then the set of atoms of $M_\alpha^\circ$ is given by
\[\mc A (M_\alpha^\circ) = \mc A' \cup \mc A''\,,
\]
where
\[\mc A'' = \{ A_{n,i} \mid -2 \le n \le N-2, n \text{ even, } 0 \le i \le a_{n+2} \text{ and } (n,i) \ne (N-2, a_N)\} \,.
\]
\end{Th}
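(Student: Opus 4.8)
The plan is to leverage Theorem \ref{Th5}.b), which already describes $\mc A(M_\alpha)$, together with the geometric fact that $M_\alpha^\circ$ is obtained from $M_\alpha$ by discarding all lattice points on the open ray $\R_{>0}\cdot(q_N,p_N)$ except $O$. First I would record the containments: $M_\alpha^\circ \subset M_\alpha$, and the only elements of $M_\alpha$ \emph{not} in $M_\alpha^\circ$ are the positive multiples $k(q_N,p_N)$, $k \in \N$ (here I use $\gcd(q_N,p_N)=1$ from Lemma \ref{L3}.a) with $i=0$, so these are precisely the lattice points on the boundary line with slope $\alpha$). In particular $A_{N,0}=(q_N,p_N)$ is no longer an atom of $M_\alpha^\circ$ — it is not even in the monoid — which explains why $(n,i)=(N-2,a_N)$ is excluded from $\mc A''$ (recall $A_{N-2,a_N}=A_{N,0}$).

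Next I would verify that the listed elements genuinely lie in $M_\alpha^\circ$ and are atoms. For $\mc A''$: each $A_{n,i}$ with $(n,i)\ne(N-2,a_N)$ satisfies $p_{n,i}/q_{n,i}<\alpha$ strictly by the chain \eqref{eq5} in Lemma \ref{L3}.b) (the only equality case $p_{n,i}/q_{n,i}=\alpha$ is exactly $(n,i)=(N-2,a_N)$), and $p_{n,i}\ge 0$ with $p_{n,i}=0$ only for $n=-2$, $i=0$ giving $(1,0)\in M_\alpha^\circ$; these are atoms of $M_\alpha$ by Theorem \ref{Th5}.b), and since $M_\alpha^\circ\subset M_\alpha$ shares the same $O$, any element that is an atom in the larger monoid and still lies in the smaller one remains an atom there. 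For $\mc A'$: I would check $A_k\in M_\alpha^\circ$ by showing $0<p(A_k)<\alpha\, q(A_k)$, i.e. that the slope of $A_k$ lies strictly between that of $A_{N-2,a_N-1}$ (a second convergent, $<\alpha$) and $\alpha$; this follows from Lemma \ref{L2} applied to $A_{N-2,a_N-1}$ and $A_{N,0}$ together with the monotonicity already used, since $A_k$ is a ``mediant-type'' combination lying on the segment between those two slopes.

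The heart of the argument — and the step I expect to be the main obstacle — is showing that $\mc A'\cup\mc A''$ \emph{exhausts} $\mc A(M_\alpha^\circ)$ and that each $A_k$ is irreducible in $M_\alpha^\circ$. For exhaustiveness I would take an arbitrary atom $B$ of $M_\alpha^\circ$ and factor it in the larger monoid $M_\alpha$ into atoms from the list in Theorem \ref{Th5}.b); the atoms $A_{N,0}^{\,(\cdot)}$ (i.e. copies of $(q_N,p_N)$) may appear, but if more than... if $B$ uses at least one boundary atom $A_{N,0}$ and at least one other atom, I would argue via a slope/angle sorting argument — among the $M_\alpha$-atoms, the boundary atom $A_{N,0}$ has the maximal slope $\alpha$, and $A_{N-2,a_N-1}$ is the atom of second-largest slope — that $B$ must be of the form $A_{N-2,a_N-1}+k\,A_{N,0}$ for some $k\ge 1$ (no other combination can be a $M_\alpha^\circ$-element that is indecomposable there: any $M_\alpha$-factorization of $B$ not of this shape either stays within $\mc A''$, contradicting $B$'s presence on the "new" part of the cone, or can be regrouped to exhibit a proper $M_\alpha^\circ$-decomposition of $B$). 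To see each $A_k$ is an atom of $M_\alpha^\circ$, suppose $A_k=C+D$ with $C,D\in M_\alpha^\circ\setminus\{O\}$; in $M_\alpha$ this refines to an atom-factorization, and by \eqref{eq2}/\eqref{eq3} the determinant/area constraint $q_{N-2,a_N-1}p_N - p_{N-2,a_N-1}q_N = \pm 1$ forces the only way to reach $A_k$ from smaller atoms to pass through a copy of $A_{N,0}\notin M_\alpha^\circ$ as a summand — so no decomposition with both parts in $M_\alpha^\circ$ exists. I would make this precise with the standard determinant bookkeeping for $2\times2$ lattice cones, which is the one genuinely technical ingredient; everything else reduces to the slope inequalities already assembled in Section 2.
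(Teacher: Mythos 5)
Your preliminary steps are fine: the elements of $\mc A''$ are atoms of $M_\alpha$ lying in $M_\alpha^\circ$ and hence remain atoms there, each $A_k$ lies in $M_\alpha^\circ$, and your regrouping of an $M_\alpha$-factorization (via Lemma \ref{L2}) does reduce any would-be atom outside $\mc A''$ to the shape $A_{n,i}+k\,A_{N,0}$ with $k\ge 1$. But the two load-bearing steps are missing. To exclude $A_{n,i}+kA_{N,0}$ with $(n,i)\ne(N-2,a_N-1)$ you only invoke a ``slope sorting'' argument based on $A_{N-2,a_N-1}$ having the second-largest slope among the atoms; that by itself rules out nothing, and for $k\ge 2$ the naive version fails: since $A_{N-2,a_N-2}+(q_N,p_N)=2A_{N-2,a_N-1}$ (when $a_N\ge2$), the element $A_{N-2,a_N-2}+kA_{N,0}=2A_{N-2,a_N-1}+(k-1)(q_N,p_N)$ has slope \emph{strictly larger} than that of $A_{N-2,a_N-1}$, so no comparison with the ``second-largest slope'' can dispose of it directly. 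The paper's proof needs exactly the ingredients you skip: the identity just quoted for $a_N\ge2$, the separate identity $A_{N-4,a_{N-2}-1}+(q_N,p_N)=(a_{N-1}+2)A_{N-2,0}$ for the case $a_N=1$ (a case split absent from your sketch), a monotonicity statement for the slopes of $(q_N,p_N)+A_{n,i}$ (its Claim 2), Lemma \ref{L8}.ii), and an induction based on its Claim 1 (if $A$ is an atom of $M_\alpha^\circ$ with second coordinate $\ge p_N$, then $A-(q_N,p_N)$ is again an atom) to cover all $k$ simultaneously.

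The second gap is worse: your argument that each $A_k$ is irreducible is not merely incomplete but a non sequitur. From the claim that any $M_\alpha$-refinement of a decomposition $A_k=C+D$ must contain a copy of $A_{N,0}$ you conclude that no decomposition with $C,D\in M_\alpha^\circ$ exists; but a summand whose refinement contains $A_{N,0}$ can perfectly well lie in $M_\alpha^\circ$ (for instance $A_{N,0}+(1,0)=(q_N+1,p_N)\in M_\alpha^\circ$), so nothing follows. The ``determinant bookkeeping'' you defer to is precisely the proof that is missing. A correct direct version would note that $q_{N-2,a_N-1}p_N-p_{N-2,a_N-1}q_N=\pm1$ makes $\{A_k,(q_N,p_N)\}$ a $\Z$-basis of $\Z^2$, so every lattice point whose slope lies in $[\,\mathrm{slope}(A_k),\alpha)$ is $aA_k+b(q_N,p_N)$ with integers $a\ge1$, $b\ge0$, hence has second coordinate at least that of $A_k$ with equality only for $A_k$ itself, and Lemma \ref{L8}.ii) then yields atomicity --- but you never carry this out. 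The paper avoids it by an indirect argument you also do not have: after the candidate analysis shows that the only possible atoms with second coordinate $\ge p_N$ are the $A_k$, it observes that $M_\alpha^\circ$ must have infinitely many atoms and then applies Claim 1 downward to conclude that every $A_k$ is indeed an atom. As it stands, your proposal omits or mis-argues exactly those parts of the theorem that require proof.
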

\bigskip

For the monoids $M_{\alpha, >0}$ and $M_{\alpha, >0}^\circ$ we use the results of the previous theorems to obtain the following description of the atoms:

\begin{Th}\label{Th7}
Let $0 < \alpha \in \R$. Then for the monoids $M_{\alpha, >0}$ and $M_{\alpha, >0}^\circ$ their sets of atoms are given by
\[\mc A (M_{\alpha, >0}) = (\mc A(M_\alpha) \setminus \{(1,0)\}) \cup \{(n,1) \mid n \in \N \text{ with } 1 \le \alpha n\}
\]
and
\[\mc A (M_{\alpha, >0}^\circ) = (\mc A(M_\alpha^\circ) \setminus \{(1,0)\}) \cup \{(n,1) \mid n \in \N \text{ with } 1 < \alpha n\}\,.
\]
\end{Th}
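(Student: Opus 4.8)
The plan is to deduce Theorem~\ref{Th7} from Theorems~\ref{Th5} and~\ref{Th6} by analysing how passing from the ``closed at the $x$-axis'' monoids $M_\alpha$, $M_\alpha^\circ$ to the ``open at the $x$-axis'' monoids $M_{\alpha,>0}$, $M_{\alpha,>0}^\circ$ changes the atom set. The key structural observation is that all four monoids of Definition~\ref{D4} differ only in which points of the two bounding rays (the $x$-axis and the line $y=\alpha x$) they contain, and that these bounding rays intersect $\N_0^2$ in the points $(n,0)$, $n\in\N_0$, respectively (when $\alpha\in\Q$) the multiples of $(q_N,p_N)$. Since both $M_{\alpha,>0}$ and $M_{\alpha,>0}^\circ$ retain every point of $M_\alpha$ (resp.\ $M_\alpha^\circ$) that has positive $y$-coordinate, and in addition retain $O$, the only elements that get removed are the points $(n,0)$ with $n\ge 1$; conversely no new elements are added. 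So I would first record the set-theoretic identities $M_{\alpha,>0}=(M_\alpha\setminus\{(n,0):n\ge 1\})\cup\{O\}$ and likewise for the ``$\circ$'' versions, and note that on the overlap the monoid operation is unchanged.

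Next I would determine which elements of $M_{\alpha,>0}$ (resp.\ $M_{\alpha,>0}^\circ$) are atoms. An element $v$ with positive $y$-coordinate is an atom of $M_{\alpha,>0}$ iff it cannot be written as $v=w_1+w_2$ with $w_1,w_2\in M_{\alpha,>0}\setminus\{O\}$. Compared with $M_\alpha$, the only decompositions that are lost are those in which one of the summands lies on the $x$-axis, i.e.\ is some $(m,0)$ with $m\ge 1$. Hence the atoms of $M_{\alpha,>0}$ are: (i) all atoms of $M_\alpha$ with positive $y$-coordinate, namely $\mc A(M_\alpha)\setminus\{(1,0)\}$ (recall $(1,0)=A_{-2,0}$ is the unique atom of $M_\alpha$ on the $x$-axis, by Theorem~\ref{Th5} together with Lemma~\ref{L3}); plus (ii) those $v$ with $y$-coordinate $\ge 1$ that were decomposable in $M_\alpha$ \emph{only} by splitting off a nonzero multiple of $(1,0)$. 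For such a $v=(x,y)$, being decomposable in $M_\alpha$ but becoming indecomposable after forbidding $x$-axis summands forces $y=1$: indeed if $y\ge 2$ one can always split $v$ inside $M_{\alpha,>0}$ (for instance, if $v$ is not itself an atom of $M_\alpha$, a minimal nontrivial summand $w\le v$ with $w\ne(m,0)$ exists because not all of $v$'s mass can be concentrated on the $x$-axis when $y\ge 1$; a short case check, using that $v-(m,0)\in M_\alpha$ forces the horizontal translate to stay in the cone, finishes this). Conversely every $(n,1)$ with $(n,1)\in M_\alpha$, i.e.\ with $1\le\alpha n$, is an atom of $M_{\alpha,>0}$: a decomposition $(n,1)=w_1+w_2$ in $M_{\alpha,>0}$ would need one summand with $y$-coordinate $0$, which is excluded. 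This yields exactly the claimed formula; the argument for $M_{\alpha,>0}^\circ$ is verbatim the same, with $M_\alpha^\circ$ in place of $M_\alpha$ and the strict inequality $1<\alpha n$ arising because $M_\alpha^\circ$ (and hence $M_{\alpha,>0}^\circ$) omits the points on the line $y=\alpha x$.

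The step I expect to be the main obstacle is the ``only $(n,1)$ survives'' direction: showing rigorously that every element $v=(x,y)\in M_\alpha$ with $y\ge 2$ that fails to be an atom of $M_\alpha$ also fails to be an atom of $M_{\alpha,>0}$, i.e.\ admits a decomposition avoiding $x$-axis summands. One clean way is induction on $x$: if $v=(x,y)$ with $y\ge 2$ is not an atom of $M_\alpha$, pick any atom $A=A_{n,i}$ of $M_\alpha$ with $A\le v$ coordinatewise and $A\ne v$; if $A$ has positive $y$-coordinate we are done since $v-A\in M_\alpha\setminus\{O\}$ still has $y$-coordinate $\ge 1$, hence $v-A\in M_{\alpha,>0}$; if $A=(1,0)$, then $v-(1,0)=(x-1,y)$ still lies in $M_\alpha$ (the slope condition $y\le\alpha(x-1)$ must be checked — this is where one uses $y\ge 2$ and a bound like $x\ge 2$, together with $y\le\alpha x$; if $x=1$ then $y\le\alpha$ and $v=(1,y)$ with $y\ge 2$ would already be forced to have $y\le\alpha<$ something, a boundary case handled separately), and apply the induction hypothesis to $(x-1,y)$. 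I would also double-check the edge case $\alpha<1$, where the $x$-axis point $(1,0)$ is still an atom of $M_\alpha$ but $(n,1)$ requires $n\ge\lceil 1/\alpha\rceil$, so that the set $\{(n,1):1\le\alpha n\}$ may start at some $n>1$; the formula as stated already accommodates this.
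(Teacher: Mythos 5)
Your overall strategy is the same as the paper's: atoms of $M_\alpha$ with positive second coordinate remain atoms of $M_{\alpha,>0}$, every $(n,1)\in M_{\alpha,>0}$ is an atom because any decomposition would need a summand on the $x$-axis, and the whole content lies in showing there are no further atoms. However, your argument for that last direction has a genuine gap. First, a smaller issue: in the branch where the chosen atom $A$ of $M_\alpha$ has positive $y$-coordinate, your claim that $v-A$ ``still has $y$-coordinate $\ge 1$'' is unjustified; if the $y$-coordinate of $A$ equals that of $v$ (e.g. $v=A'+(1,0)$ with $A'$ an atom), then $v-A$ lies on the $x$-axis and is not in $M_{\alpha,>0}$. (Also, $A\le v$ coordinatewise does not by itself give $v-A\in M_\alpha$ -- the slope condition can fail -- so $A$ must be taken from an actual factorization of $v$ in $M_\alpha$.)

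The serious problem is the branch $A=(1,0)$: you ``apply the induction hypothesis to $(x-1,y)$,'' but your induction hypothesis only concerns elements that are \emph{not} atoms of $M_\alpha$. In the critical case $v=A'+(1,0)$ with $A'$ an atom of $M_\alpha$ having second coordinate $y\ge 2$, the element $(x-1,y)=A'$ \emph{is} an atom of $M_\alpha$, the induction bottoms out, and you give no argument -- yet this is exactly the nontrivial content of the theorem (such $v$ must be shown to decompose inside $M_{\alpha,>0}$ even though its only factorizations in $M_\alpha$ may appear to use the $x$-axis). The paper resolves precisely this case by an explicit splitting: with $n\in\N$ minimal such that $\frac 1n\le\alpha$, it writes $v=(n,1)+(x-n,y-1)$ and verifies $(x-n,y-1)\in M_{\alpha,>0}$ using Lemma \ref{L2} together with the inequality $\frac{y}{x-1}\le\alpha$ coming from $v-(1,0)\in M_\alpha$. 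Your ``short case check'' remark gestures at this but never supplies the argument, so the ``only these are atoms'' direction is not proved; the same gap recurs verbatim in your treatment of $M_{\alpha,>0}^\circ$.
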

\bigskip

To characterize the atoms of $M_\alpha$ and $M_\alpha^\circ$ the following lemma is very useful:

\begin{Le}\label{L8}
Let $0 < \alpha \in \R$ and put $H=M_\alpha$ or $H=M_\alpha^\circ$.\newline 
Then $A=(x_0, y_0) \in H \setminus \{O\}$ is an atom of $H$ if and only if either

i) $y_0=0$ and $A=(1,0)$ or

ii) $y_0>0$ and for all $h=(x,y) \in H$ with $h \not\in \{ O, A\}$ and $0 < y \le y_0$ we have: $\dfrac yx < \dfrac {y_0}{x_0}$.
\end{Le}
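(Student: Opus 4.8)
The plan is to characterize atoms of $H$ (where $H=M_\alpha$ or $H=M_\alpha^\circ$) directly from the definition: $A \in H \setminus \{O\}$ is an atom iff it cannot be written as $h_1 + h_2$ with $h_1, h_2 \in H \setminus \{O\}$. First I would dispose of the case $y_0 = 0$: the only points of $H$ on the $x$-axis are $(k,0)$ for $k \in \N_0$ (for both $M_\alpha$ and $M_\alpha^\circ$, since $(k,0)$ satisfies $0 \le 0 \le \alpha k$ and, if $k \ge 1$, also $0 \le 0 < \alpha k$), and $(k,0) = (1,0) + (k-1,0)$ shows $(k,0)$ is not an atom for $k \ge 2$, while $(1,0)$ is clearly an atom. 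This gives case i).

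Next, assume $y_0 > 0$. I would prove the equivalence between "$A$ is an atom" and condition ii) by proving the contrapositive in both directions. Suppose first that ii) fails, i.e. there exists $h = (x,y) \in H$ with $h \notin \{O, A\}$, $0 < y \le y_0$, and $\frac{y}{x} \ge \frac{y_0}{x_0}$. Set $h' = A - h = (x_0 - x, y_0 - y)$. I must check $h' \in H$ and $h' \ne O$: since $y \le y_0$ we have $y_0 - y \ge 0$; since $h \ne A$ and $y \le y_0$, if $y = y_0$ then $x \ne x_0$, but $\frac{y}{x} \ge \frac{y_0}{x_0} = \frac{y}{x_0}$ forces $x \le x_0$, hence $x < x_0$, so $h' \ne O$ with $h'$ on the $x$-axis — but wait, I need $h' \in H$, which for a point with second coordinate $0$ requires first coordinate $\ge 0$, fine; if $y < y_0$ then $h' \ne O$ automatically. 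The key point is that $h' = (x_0 - x, y_0 - y)$ lies in $H$: one needs $0 \le y_0 - y \le \alpha(x_0 - x)$ (resp. with strict inequality on the right for $M_\alpha^\circ$, unless $h' = O$). The left inequality is clear. For the right, from $h \in H$ we have $y \le \alpha x$ and from $\frac{y}{x} \ge \frac{y_0}{x_0}$ together with $A \in H$, i.e. $y_0 \le \alpha x_0$, one computes $y_0 - y \le \alpha x_0 - y = \alpha x_0 - \frac{y}{x} x \le \alpha x_0 - \frac{y_0}{x_0} x \le \alpha(x_0 - x)$ using $\alpha x_0 \ge y_0$; the strict version for $M_\alpha^\circ$ needs a little care when $x_0 - x > 0$, handled similarly. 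Hence $A = h + h'$ with both summands nonzero, so $A$ is not an atom. Conversely, if $A$ is not an atom, write $A = h_1 + h_2$ with $h_i = (x_i, y_i) \in H \setminus \{O\}$; WLOG $y_1 \le y_2$ so $0 < y_1 \le y_0/2 \le y_0$ (note $y_1 > 0$ is automatic unless $h_1$ is on the $x$-axis, in which case replace $h_1$ by $h_2$ — at least one $h_i$ has positive second coordinate since $y_0 > 0$; and I should take $h_1$ to be one with $0 < y_1 \le y_0$, which exists). Then $\frac{y_1}{x_1} \ge \frac{y_0}{x_0}$ would, by Lemma \ref{L2} applied to $h_1$ and $h_2 = A - h_1$, give $\frac{y_0}{x_0} = \frac{y_1 + y_2}{x_1 + x_2}$ lying strictly between $\frac{y_1}{x_1}$ and $\frac{y_2}{x_2}$, hence strictly below $\frac{y_1}{x_1}$, contradiction — so $\frac{y_1}{x_1} < \frac{y_0}{x_0}$, and $h_1 \notin \{O, A\}$ since both $h_i \ne O$. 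Thus ii) fails.

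The main obstacle I anticipate is the careful bookkeeping of the boundary cases — distinguishing $M_\alpha$ from $M_\alpha^\circ$ when summands or differences land on the line $y = \alpha x$ or on the $x$-axis, and making sure the chosen summand $h_1$ genuinely satisfies $0 < y_1 \le y_0$ and $h_1 \notin \{O, A\}$. In particular, when $h' = A - h$ has second coordinate $0$ one must verify it is still in $H$ (this only needs nonnegative first coordinate, which follows from $\frac{y}{x} \ge \frac{y_0}{x_0}$ as above), and in the $M_\alpha^\circ$ case one must ensure the strict inequality $y_0 - y < \alpha(x_0 - x)$ whenever $x_0 - x > 0$, which follows because $h \in M_\alpha^\circ$ gives $y < \alpha x$ (when $x > 0$) so the inequality chain becomes strict. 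Everything else is the elementary arithmetic of the cone inequalities combined with Lemma \ref{L2}, which does the real work in the converse direction.
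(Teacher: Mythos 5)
Your overall strategy coincides with the paper's (settle $y_0=0$ separately, then show that a violation of ii) produces a decomposition $A=h+h'$ with $h'=A-h\in H\setminus\{O\}$, and that a decomposition of $A$ produces a violation of ii), with Lemma \ref{L2} carrying the load), but both halves contain genuine errors as written. In the forward direction your inequality chain breaks at its last step: $\alpha x_0 - \frac{y_0}{x_0}x \le \alpha(x_0-x)$ is equivalent to $\frac{y_0}{x_0}\ge \alpha$, which is exactly backwards, since $A\in H$ gives $\frac{y_0}{x_0}\le\alpha$. The needed conclusion $y_0-y\le\alpha(x_0-x)$ is true, but you must get it differently: from $\frac{y}{x}\ge\frac{y_0}{x_0}$ you have $y\ge\frac{y_0}{x_0}x$, hence $y_0-y\le\frac{y_0}{x_0}(x_0-x)\le\alpha(x_0-x)$, using $x_0-x\ge 0$; for $M_\alpha^\circ$ the strict inequality then comes from $\frac{y_0}{x_0}<\alpha$ together with $x_0-x>0$ (note $x_0=x$ would force $h=A$). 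Equivalently, one can apply Lemma \ref{L2} to $h$ and $h'$ as the paper does, which yields $\frac{y'}{x'}\le\frac{y_0}{x_0}\le\alpha$ at once. So the idea is right but your stated computation does not prove membership of $h'$ in $H$, and the ``handled similarly'' strict case inherits the same defect.

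The converse direction is logically garbled. To refute ii) you must exhibit a summand whose slope is \emph{at least} $\frac{y_0}{x_0}$; instead you argue (via a ``contradiction'' that is not one) that your chosen summand satisfies $\frac{y_1}{x_1}<\frac{y_0}{x_0}$ and then conclude ``thus ii) fails'' --- a non sequitur. Moreover, if $\frac{y_1}{x_1}\ge\frac{y_0}{x_0}$, Lemma \ref{L2} only says that the mediant $\frac{y_0}{x_0}$ lies (weakly) between $\frac{y_1}{x_1}$ and $\frac{y_2}{x_2}$, i.e. $\frac{y_2}{x_2}\le\frac{y_0}{x_0}\le\frac{y_1}{x_1}$; that is perfectly consistent, and your ``strictly between'' also overlooks the case of equal slopes. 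The correct move, which is the paper's, is to order the summands by slope rather than by second coordinate: with $\frac{y_2}{x_2}\le\frac{y_1}{x_1}$, Lemma \ref{L2} gives $\frac{y_1}{x_1}\ge\frac{y_0}{x_0}>0$, hence $y_1>0$ and $y_1\le y_0$ (and $x_1,x_2>0$ since a nonzero element of $H$ on the $y$-axis is impossible), so $h_1$ itself is the element violating ii). As written, your proof of this direction does not go through, although the repair is short and brings you back exactly to the paper's argument.
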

\medskip

\begin{Rem}~

{\textbf 1.} Since for all $O \ne h=(x,y) \in H $ we have $\frac yx \le \alpha$, this lemma reveals the intimate connection between atoms of $H$ and ``{}best approximations of $\alpha$" (from below), cp. \cite{Kh}, p. 21.
\smallskip

{\textbf 2.} Geometrically, condition ii) means that the triangle with corners $O$, $A$ and $(y_0/\alpha, y_0)$ contains no elements of $H$ up to $O$ and $A$.
\end{Rem}
\medskip 

\begin{proof}[Proof of Lemma \ref{L8}]
Since $(1,0)$ is the only atom of $H$ contained in $H \cap (\Z \times \{0\})$, we may assume for the proof that $y_0 > 0$.

Let $A=(x_0, y_0)$ be an atom with $y_0>0$, and suppose that there exists some $h=(x,y) \in H \setminus \{O,A\}$ with $0 < y \le y_0$  and $\frac yx \ge \frac {y_0}{x_0}$. Then also $x_0 \ge x$, and we put $x'=x_0-x \ge 0$, $y'=y_0-y \ge 0$ and $h'=(x',y')$. Since $h \ne A$, we have $h' \ne O$. From
\begin{equation}\label{eq6} A = h + h' 
\end{equation}
we obtain with Lemma \ref{L2} \ $\frac {y'}{x'} \le \frac {y_0}{x_0} \le \frac yx \le \alpha$, so $O \ne h' \in H$, and \eqref{eq6}  is a contradiction that $A$ is an atom.
\smallskip

Now suppose that $A$ is not an atom. Then we have $h=(x,y)$, $h'=(x',y') \in H \setminus \{O\}$ with $A=h+h'$. Without loss of generality we may assume that $\frac {y'}{x'} \le \frac yx$. With Lemma \ref{L2} we obtain $\frac {y'}{x'} \le \frac {y_0}{x_0} \le \frac yx$. Since $y \le y_0$, $h$ contradicts ii) in the statement of the lemma.
\end{proof}
\bigskip

\begin{proof}[Proof of Theorem \ref{Th5}]
We will give a joint proof for {\textbf a)} and {\textbf b)}.\newline 
For $0 \le i \le a_0$ we have $A_{-2,i}=(1,i)$, and these are exactly all atoms of $M_\alpha$ with $x \le 1$.
\smallskip

Let us first prove that all indicated $A_{n,i}$ are indeed atoms of $M_\alpha$. Choose any even $n \ge 0$ and $0 \le i \le a_{n+2}$. From \eqref{eq5} we see that $A_{n,i} \in M_\alpha$. Using Lemma \ref{L8}.ii), let us suppose that there exists some $h=(x,y) \in H \setminus \{O, A_{n,i}\}$ with $0 < y \le p_{n,i}$ and $\dfrac {p_{n,i}} {q_{n,i}} \le \dfrac yx$.\newline 
By Lemma \ref{L3}.a), and since $y \le p_{n,i}$, $\dfrac {p_{n,i}} {q_{n,i}} = \dfrac yx$ would imply $h=A_{n,i}$, so we have $\dfrac {p_{n,i}} {q_{n,i}} < \dfrac yx$, and consequently $q_{n,i} > x$. Since $n$ is even (and $n \le N-2$ in case {\textbf b)}), we have\newline $\dfrac {p_{n,i}} {q_{n,i}} < \dfrac yx \le \alpha < \dfrac {p_{n+1}} {q_{n+1}}$, and obtain 
\[\dfrac 1 {q_{n+1} q_{n,i}} = \dfrac {p_{n+1}} {q_{n+1}} - \dfrac {p_{n,i}} {q_{n,i}} > \dfrac {p_{n+1}} {q_{n+1}} - \dfrac yx > 0\ .
\]
Therefore $\dfrac 1 {q_{n+1} q_{n,i}} > \dfrac 1 {x q_{n+1}}$ and $x > q_{n,i}$, a contradiction to the inequality obtained above.\medskip

Now let us show that $M_\alpha$ has no other atoms than those indicated in the theorem.\newline 
First observe that in case {\textbf b) } $A_{N-2,a_N} = (q_N, p_N)$ is an atom of $M_\alpha$ (with $\frac {p_N} {q_N} = \alpha$), and Lemma \ref{L8} shows that there exist no atoms with second component exceeding $p_N$.\smallskip

\noindent Let $A=(x_0,y_0)$ be any atom of $M_\alpha$ with $y_0 \ge 1$. We can find some $n \ge -2$ and $0 \le i < a_{n+2}$ with
\begin{equation}\label{eq7} p_{n,i} < y_0 \le p_{n,i+1}\ .
\end{equation}
(Here we use that $p_{n, a_{n+2}} = p_{n+2, 0}\,$.) Assume that $A \ne A_{n,i+1}$,  so $A_{n,i}$, $A$ and $A_{n,i+1}$ are three different atoms, and with Lemma \ref{L8}.ii) we obtain
\begin{equation}\label{eq8} \dfrac {p_{n,i}} {q_{n,i}} < \dfrac {y_0}{x_0} < \dfrac {p_{n,i+1}} {q_{n,i+1}}\ .
\end{equation}
We have $\dfrac 1 {q_{n,i+1} q_{n,i}} = \dfrac {p_{n,i+1}} {q_{n,i+1}} - \dfrac {p_{n,i}} {q_{n,i}} = \Bigl( \dfrac {p_{n,i+1}} {q_{n,i+1}} - \dfrac {y_0}{x_0} \Bigr) + \Bigl( \dfrac {y_0}{x_0} - \dfrac {p_{n,i}} {q_{n,i}} \Bigr) \ge \dfrac 1 {x_0 q_{n,i+1}} + \dfrac 1 {x_0 q_{n,i}}$, which yields $x_0 \ge q_{n,i} + q_{n,i+1}$. 
Combining this inequality with \eqref{eq7} and \eqref{eq8} we obtain
\[(q_{n,i} + q_{n, i+1}) p_{n,i} \le x_0 p_{n,i} < y_0 q_{n,i} \le q_{n,i} p_{n,i+1}
\]
and finally
\[p_{n,i}q_{n,i} < q_{n,i} p_{n,i+1} - p_{n,i} q_{n, i+1} = 1\ .
\]
This implies $p_{n,i}=0$, i.e. $(n,i)=(0,0)$ and $a_0=0$. So $A_{n,i} = A_{0,0} = (1,0)$ and $A_{n,i+1} = A_{0,1} = (1+a_1,1)$, but then $A=(x_0,1)$ with $x_0 > a_1+1$ cannot be an atom.
\smallskip

Finally, in case {\textbf b)} the remark following the statement of Theorem \ref{Th5} immediately yields the number of atoms for rational $\alpha$. It is well known that in this case $M_\alpha$ is a Krull monoid.
\end{proof}
\bigskip

\begin{proof}[Proof of Theorem \ref{Th6}]
We have $\alpha = \dfrac {p_N}{q_N}$ and put $B=(q_N,p_N) \not\in M_\alpha^\circ$.

First we will prove the following
\smallskip

\noindent\textit{Claim 1: If $A=(x_0,y_0) \in  M_\alpha^\circ$ is an atom with $y_0 \ge p_N$, then also $A-B = (x_0-q_N, y_0-p_N)$ is an atom of $ M_\alpha^\circ$.}
\smallskip

Since $\alpha(x_0-q_N) > y_0-p_N \ge 0$, we have $A-B \in M_\alpha^\circ \setminus \{O\}$.\newline Suppose that $A-B$ is not an atom.

\textit{Case 1:} Assume that $y_0-p_N=0$.\newline
Since $(1,0)$ is an atom, we have $x_0-q_N \ge 2$ and $A = (q_N+1, p_N) + (x_0-q_N-1,0)$ contradicts that A is an atom.

\textit{Case 2:} Assume that $y_0-p_N \ge 1$.\newline
By Lemma \ref{L8}.ii) there exists $h=(x,y) \in M_\alpha^\circ \setminus \{O, A-B\}$ with $0<y \le y_0-p_N$ and 
\begin{equation}\label{eq9} \dfrac yx \ge \dfrac {y_0-p_N}{x_0-q_N}\,.
\end{equation}
From this one easily obtains $0 < x \le x_0-q_N$. Using Lemma \ref{L2} and \eqref{eq9} we obtain
\[\dfrac {p_N} {q_N} > \dfrac yx \ge \dfrac {y_0-p_N} {x_0-q_N} \ge \dfrac {y_0-p_N-y} {x_0-q_N-x}\,,
\]
which shows that $O \ne A-B-h \in M_\alpha^\circ$. Again employing Lemma \ref{L2} we get
\begin{equation}\label{eq10} \dfrac {p_N} {q_N} > \dfrac {y_0-y}{x_0-x} > \dfrac {y_0-p_N-y} {x_0-q_N-x}\,.
\end{equation}
From \eqref{eq9} and \eqref{eq10} we obtain
\[x_0y - yq_N \ge xy_0 - xp_N \quad\text{and}\quad x_0p_N - xp_N > y_0q_N - yq_N\,.
\]
Adding these inequalities, we get $x_0(y+p_N) > y_0(x+q_N)$ and $\dfrac {y+p_N}{x+q_N} > \dfrac {y_0}{x_0}$. So the fact that the nonzero element $h+B$ properly divides $A$ in $M_\alpha^\circ$ contradicts that $A$ is an atom, which concludes the proof of \textit{Claim 1}.
\medskip

\noindent\textit{Claim 2: For all even $-2 \le n \le N-2$ and all $0 \le i < a_{n+2}$ we have
\[\dfrac {p_N+p_{n,i+1}} {q_N+q_{n,i+1}} > \dfrac {p_N+p_{n,i}} {q_N+q_{n,i}}\ .
\]}
\smallskip

The stated inequality is equivalent to 
$p_{n,i+1} q_N + p_N q_{n,i} + p_{n,i+1} q_{n,i} > p_N q_{n,i+1} + p_{n,i} q_N + p_{n,i} q_{n,i+1}$, which yields
$(q_N p_{n+1} - p_N q_{n+1}) + (p_{n,i+1} q_{n,i} - q_{n,i+1} p_{n,i}) > 0$.\newline 
Since $n+1 \le N-1$ is odd, $\dfrac {p_N}{q_N} = \alpha < \dfrac {p_{n+1}} {q_{n+1}}$, and furthermore $\dfrac {p_{n,i}} {q_{n,i}} < \dfrac {p_{n,i+1}} {q_{n,i+1}}$, both brackets above are positive, which proves \textit{Claim 2}.
\bigskip

Since the monoids $M_\alpha$ and $M_\alpha^\circ$ contain the same elements $(x,y)$ with $y < p_N$, it is immediate that the atoms within this subset coincide, and so by Theorem \ref{Th5} \ $\mc A''$ is exactly the set of all atoms  $(x,y) \in M_\alpha^\circ$ with $y < p_N$.
\medskip

Now we search for atoms $A=(x_0,y_0) \in  M_\alpha^\circ$ with $p_N \le y_0 < 2p_N$. By \textit{Claim 1} they must be of the form $A = B + A_{n,i}$ with $A_{n,i} \in \mc A''$. We have
\[A_{N-2, a_N-1} = (q_N - q_{N-1}, p_N - p_{N-1})\,.
\]
\textit{Case 1:} Assume that $a_N \ge 2$. One calculates $A_{N-2, a_N-2} + B = 2 A_{N-2, a_N-1}$, so this is not an atom and $\dfrac {p_N - p_{N-1}} {q_N - q_{N-1}} = \dfrac {p_N + p_{N-2, a_N-2}} {q_N + q_{N-2, a_N-2}}$. Now \textit{Claim 2} and Lemma \ref{L8}.ii) show that the only element of the form $B + A_{n,i}$ which could be an atom, is $A_1 = B + A_{N-2, a_N-1}$.
\smallskip

\noindent \textit{Case 2:} Assume that $a_N = 1$. Now we calculate $A_{N-4, a_{N-2}-1} + B = (a_{N-1}+2) A_{N-2, 0}$, so 
$\dfrac {p_{N-2}} {q_{N-2}} = \dfrac {p_N + p_{N-4, a_{N-2}-1}} {q_N + q_{N-4, a_{N-2}-1}}$, and the same ideas as above again show that the only element of the form $B + A_{n,i}$ which could be an atom, is $A_1 = B + A_{N-2, a_N-1}$.
\smallskip

Since $M_\alpha^\circ$ must have infinitely many atoms, \textit{Claim 1} proves with an inductive argument that $A_1$ must indeed be an atom, and that $A_k$ is the only atom of $M_\alpha^\circ$ with second component between $k p_N$ and $(k+1)p_N$.
\end{proof}
\bigskip

\begin{proof}[Proof of Theorem \ref{Th7}]
Since $M_{\alpha, >0} \subset M_\alpha$, all atoms of $M_\alpha$ which lie in $M_{\alpha, >0}$ are also atoms of $M_{\alpha, >0}$. Obviously, all elements $(x,1) \in M_{\alpha, >0}$ are atoms, too.\newline 
Now let us prove that $M_{\alpha, >0}$ has no further atoms. Suppose that $A=(x_0,y_0) \in M_{\alpha, >0}$ is an atom with $y_0 \ge 2$, but not an atom of $M_\alpha$. Then we have $O \ne h, h' \in M_\alpha$ with $A=h+h'$. Without restriction, we can suppose that $h=(1,0)$, and $h'=(x_0-1, y_0) \in M_\alpha$, so
\begin{equation}\label{eq11} \frac {y_0} {x_0 -1} \le \alpha\,. 
\end{equation}
Choose $n \in \N$ minimal with $\frac 1n \le \alpha$. We want to show that $A = (n,1) + (x_0-n, y_0-1)$ is a sum of 2 elements of $M_{\alpha, >0}$, a contradiction that $A$ is an atom.\newline
By our choice of $n$, \ $(n,1) \in M_{\alpha, >0}\,$. Now suppose that $\frac {y_0 -1}{x_0 -n} > \alpha$. If $n \ge 2$, we have $\frac 1{n-1} > \alpha$, and Lemma \ref{L2} yields $\frac {1 + (y_0 -1)}{(n-1) + (x_0 -n)} = \frac {y_0}{x_0 -1} > \alpha$, a contradiction to \eqref{eq11}. If $n=1$, we directly obtain $\frac {y_0 -1}{x_0 -1} < \frac {y_0}{x_0 -1} \le \alpha$, so  again $(x_0 -n, y_0 -1) \in M_{\alpha, >0}$.
\smallskip

The proof for $M_{\alpha, >0}^\circ$ is completely analogous.
\end{proof}
\bigskip

\section{Arbitrary root-closed submonoids of $\Z^2$}

Suppose for the moment that $H$ is a monoid with quotient group $\mc Q (H) = \Z^s \subset \R^s$ for some $s \in \N$, and put
\[\text{\rm cone}(H) = \Bigl\{ \sum_{i=1}^n \lambda_i h_i \mid n \in \N, h_i \in H, 0 \le \lambda_i \in \R \Bigr\}, 
\]
the convex cone (with apex $O$) spanned by $H$. Then several properties of $H$ can be characterized in geometric terms (see e.g. \cite{L}, \cite{G-HK-L}, \cite{K-L}, \cite{Gr}). We summarize some of these results in the following proposition. For the definition of those properties, which are not explicitly needed in the present paper, the reader is referred to \cite{G-HK}.

\begin{Pro}\label{Pr9}
Let $s \in \N$ and $H$ be a monoid with quotient group $\mc Q (H) = \Z^s \subset \R^s$. Then we have:
\smallskip

\textbf{a)} $H$ is root-closed if and only if $H = \text{\rm cone}(H) \cap \Z^s$.
\smallskip

\textbf{b)} $H$ is completely integrally closed if and only if $H = \overline{\text{\rm cone}(H)} \cap \Z^s$ (the bar denotes the topological closure).
\smallskip

\textbf{c)} $H$ is a Krull monoid if and only if $\text{\rm cone}(H)$ is a closed polytopal cone.
\smallskip

\textbf{d)} $H$ is primary and reduced if and only if $\text{\rm cone}(H) \setminus \{O\}$ is open and $\text{\rm cone}(H) \ne \R^s$.
\end{Pro}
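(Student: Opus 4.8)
The plan is to prove each of the four equivalences by translating the monoid-theoretic property into the geometry of $\text{\rm cone}(H)$, using throughout the fact that $H \subset \mc Q(H) = \Z^s$ and that $\text{\rm cone}(H)$ is a convex cone with apex $O$ whose $\R$-linear span is all of $\R^s$ (since $H$ has full rank). A useful preliminary observation, which I would record first, is that $H \subset \text{\rm cone}(H) \cap \Z^s$ always holds, so in parts a) and b) only the reverse inclusion is at issue.

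For \textbf{a)}, if $H = \text{\rm cone}(H) \cap \Z^s$ and $x \in \mc Q(H) = \Z^s$ satisfies $nx \in H$ for some $n \in \N$, then $nx \in \text{\rm cone}(H)$, hence $x = \frac1n(nx) \in \text{\rm cone}(H)$ (cones are closed under nonnegative scaling), so $x \in \text{\rm cone}(H) \cap \Z^s = H$; thus $H$ is root-closed. Conversely, suppose $H$ is root-closed and let $x \in \text{\rm cone}(H) \cap \Z^s$. Write $x = \sum_{i=1}^k \lambda_i h_i$ with $h_i \in H$ and $\lambda_i \ge 0$; by Carathéodory's theorem for cones one may even take $k \le s$. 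Since $x$ has rational (indeed integer) coordinates and the $h_i \in \Z^s$, a standard argument shows one can choose the $\lambda_i$ to be rational — the cone generated by finitely many rational vectors is the set of rational nonnegative combinations of them, because the system $x = \sum \lambda_i h_i$, $\lambda_i \ge 0$ is a rational linear feasibility problem and hence has a rational solution whenever it has a real one. Clearing denominators, there is $n \in \N$ with $nx = \sum (n\lambda_i) h_i \in H$ and $n\lambda_i \in \N_0$; root-closedness then gives $x \in H$. Hence $H = \text{\rm cone}(H) \cap \Z^s$.

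For \textbf{b)}, the argument is parallel but with $\overline{\text{\rm cone}(H)}$ in place of $\text{\rm cone}(H)$: complete integral closure is the statement that whenever $c + nx \in H$ for a fixed $c \in H$ and all $n \in \N$ (equivalently, $x \in \mc Q(H)$ has all its multiples staying within a translate of $H$), then $x \in H$; geometrically this says $x$ lies in the recession cone, i.e. in $\overline{\text{\rm cone}(H)}$, and one checks both directions by the same rationality-of-solutions device together with the fact that $\overline{\text{\rm cone}(H)} \cap \Z^s$ and $\text{\rm cone}(H) \cap \Z^s$ differ only in points on the boundary that are nonetheless limits of interior rational rays. For \textbf{c)}, a Krull monoid of this type is precisely one that is completely integrally closed and whose divisor theory is finitely generated; using b), $H$ is Krull iff $H = \overline{\text{\rm cone}(H)} \cap \Z^s$ and this monoid is finitely generated, and a finitely generated cone is by definition (the closure of) a polytopal — i.e. polyhedral — cone, with the converse following from Gordan's lemma, which guarantees $P \cap \Z^s$ is finitely generated for any rational polyhedral cone $P$. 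For \textbf{d)}, primary means every nonzero element divides a power of every other nonzero element, which translates to: every nonzero point of $H$ lies in the interior of $\text{\rm cone}(H)$ (so that large multiples absorb any fixed element), i.e. $\text{\rm cone}(H)\setminus\{O\}$ is open; reduced (no nontrivial units) rules out $\text{\rm cone}(H)$ containing a line, and combined with full rank this is exactly $\text{\rm cone}(H) \ne \R^s$.

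The main obstacle is the rationality step in a) and b): making rigorous that a point of $\text{\rm cone}(H) \cap \Z^s$ admits a representation as a nonnegative \emph{rational} combination of finitely many elements of $H$, and conversely handling the boundary points in the completely-integrally-closed case where a point of $\overline{\text{\rm cone}(H)}$ need not have any positive-coefficient representation at all. I would isolate this as a short lemma on rational polyhedral cones (the cone generated over $\R_{\ge0}$ by finitely many vectors in $\Q^s$ meets $\Q^s$ in exactly the $\Q_{\ge0}$-combinations of those vectors), cite it if it is available in \cite{B-G} or \cite{C-L-S}, and then the rest of each equivalence is a routine unwinding of the relevant monoid-theoretic definition from \cite{G-HK}.
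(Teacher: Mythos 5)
The paper itself gives no proof of Proposition \ref{Pr9}: it is explicitly a summary of results from \cite{L}, \cite{G-HK-L}, \cite{K-L} and \cite{Gr}, so your proposal must be judged against those sources rather than against an argument in the text. Your part a) is correct and is the standard argument: the trivial inclusion $H\subseteq\text{\rm cone}(H)\cap\Z^s$, the observation that membership of an integer point in the cone is a rational linear feasibility problem (so the coefficients may be taken rational), clearing denominators to get $nx\in H$, and then root-closedness. The easy direction of b) (the limit argument showing that an almost integral element lies in $\overline{\text{\rm cone}(H)}\cap\Z^s$) is also fine.

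The remaining parts have genuine gaps. For the converse of b), your ``same rationality-of-solutions device'' does not apply: a lattice point $x\in\overline{\text{\rm cone}(H)}$ need not be a nonnegative combination of elements of $H$ at all, so one must construct a witness of almost integrality. The standard step, absent from your sketch, is to take $c\in H$ a sum of $s$ linearly independent elements of $H$ (an interior point of the cone) and use convexity: $(c+nx)/(n+1)$ lies in the interior of $\text{\rm cone}(H)$ for every $n$, hence $c+nx\in\text{\rm cone}(H)\cap\Z^s=H$ by a) (complete integral closedness implies root-closedness), and then $x\in H$. Parts c) and d) are only dictionaries, and not accurate ones. ``Krull'' means completely integrally closed and $v$-noetherian, not ``c.i.c.\ with finitely generated divisor theory''; the substantive half of c), that a Krull monoid with quotient group $\Z^s$ is finitely generated (equivalently, that $\text{\rm cone}(H)$ is then closed and polytopal), is asserted rather than proved; Gordan's lemma needs a \emph{rational} cone (true here, since every extreme ray of a closed $\text{\rm cone}(H)$ passes through a point of $H$, but this must be said); and, most seriously, the $\Leftarrow$ directions of c) and d) cannot be obtained from the shape of $\text{\rm cone}(H)$ alone: $H=\langle(2,0),(3,0),(0,1)\rangle\subset\Z^2$ has $\text{\rm cone}(H)=\R_{\ge0}^2$, a closed polytopal cone, yet $H$ is not root-closed and hence not Krull; likewise the monoid generated by $\{(x,y)\in\N^2\}\cup\{(-1,1)\}$ has $\text{\rm cone}(H)\setminus\{O\}$ equal to the open upper half-plane, but it is not primary, since $(1,1)$ divides no multiple of $(-1,1)$. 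Any correct treatment of these directions must carry the identification $H=\text{\rm cone}(H)\cap\Z^s$ along (as the cited sources do); your chain of equivalences silently assumes it at exactly this point. Finally, in d), ``$\text{\rm cone}(H)$ contains no line'' is not the same as $\text{\rm cone}(H)\neq\R^s$ (a closed half-plane contains a line); the two coincide only in the presence of the openness hypothesis, so that step of your dictionary also needs an argument.
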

\bigskip

Now let us return to the case $s=2$. Then Proposition \ref{Pr9}.\textbf{a)} shows that any root-closed monoid $H$ with $\mc Q (H) = \Z^2$ can be obtained from some convex cone $\mc C \subset \R^2$ with apex $O$, which spans $\R^2$, by $H = \mc C \cap \Z^2$. To make the results about the atoms of $H$ more clearly and avoid an abundance of possible cases, we will use automorphisms $\phi$ of the group $\Z^2$ to transform $\mc C$ into special positions. Note that then the monoids $H$ and $\phi (H)$ are isomorphic to each other, and we have $\mc A(\phi (H)) = \phi(\mc A(H))$.

As is well known, any automorphism $\phi$ of the group $\Z^2$ is given by $\phi: \binom{x}{y} \mapsto A \binom{x}{y}$ with some $A = \binom{a\ b} {c\ d} \in \text{GL}_2 (\Z)$, and we will tacitly extend this to a linear map of $\R^2$, again called $\phi$. In particular, let $\sigma_x$ denote the reflection at the $x$-axis, given by $A= \binom{1\ \ 0\ }{0\ -1}$.
\smallskip

Our main result is the following

\begin{Th}\label{Th10}
Let $H \subset \Z^2 \subset \R^2$ be a root-closed monoid with quotient group $\mc Q (H) = \Z^2$, and let $\mc C \subset \R^2$ be a convex cone\footnote{By Proposition \ref{Pr9}.a) such $\mc C$ always exists, take e.g. $\mc C = \text{\rm cone}(H)$.} with apex $O$ and $H= \mc C \cap \Z^2$. Then $H$ is isomorphic to one of the monoids $H'$ of the following list, depending on the geometric form of $\mc C$:
\medskip

\noindent\textbf{A. } $\mc C = \R^2$. \ Then $H=H'=\Z^2$ is a group.
\medskip

\noindent\textbf{B. } $\mc C$ is a halfspace of $\R^2$.
\smallskip

\textbf{B1.} $H' = \Z \times \N_0$: this is a Krull monoid with divisor theory (even a factorial monoid). Its group of units is $\Z \times \{0\}$, and there is one class of associated atoms, namely $\Z \times \{1\}$.\smallskip

\textbf{B2.} $H' = (\Z \times \N) \cup (\N_0 \times \{0\})$: one has $\mc A(H') = \{(1,0)\}$, and no $(x,y) \in H'$ with $y>0$ is an atom or a sum of atoms.
\smallskip

\textbf{B3.} $H' = (\Z \times \N) \cup \{(0,0)\}$: this is a primary BF-monoid, but not an FF-monoid. It is half-factorial, and one has $\mc A(H') = \{(n,1) \mid n \in \Z\}$.
\smallskip

\textbf{B4.} $H' = \{(x,y) \in \Z^2 \mid y \le \alpha x\}$ with some $0 < \alpha \in \R \setminus \Q$: one has $\mc A(H') = \{ \}$.
\medskip

\noindent\textbf{C. } $\mc C$ is bounded by two rays with a positive angle less than $\pi$.
\smallskip

\textbf{C1.} $H' \in \{M_\alpha, M_\alpha^\circ, M_{\alpha, >0}, M_{\alpha, >0}^\circ\}$ with some $0 < \alpha \in \R$: the properties and atoms of these monoids are given in Theorems \ref{Th5}, \ref{Th6} and \ref{Th7}.
\smallskip

\textbf{C2.} $H' = H_1 \cup \sigma_x(H_2)$, where $H_1 \in \{M_\alpha, M_\alpha^\circ\}$ and $H_2 \in \{M_\beta, M_\beta^\circ\}$
with some $0 < \alpha, \beta \in \R$: then $\mc A (H') = \mc A (H_1) \cup \sigma_x(\mc A (H_2))$, and the properties of $H'$ depend on $H_1$ and $H_2$.
\end{Th}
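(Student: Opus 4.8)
The plan is to reduce the theorem to a geometric classification of $\mc C$. By Proposition~\ref{Pr9}.\textbf{a)} we may take $\mc C=\mathrm{cone}(H)$, so that $H=\mc C\cap\Z^2$ and $\mc C$ is a convex cone with apex $O$ spanning $\R^2$. Identifying $\mc C\setminus\{O\}$ with an arc on the circle of directions, there are exactly three possibilities: (A) $\mc C=\R^2$; (B) $\overline{\mc C}$ is a closed halfplane; (C) $\overline{\mc C}$ is a pointed wedge of angle $\theta\in(0,\pi)$. Case A is immediate. In the remaining cases we move $\mc C$ into standard position by an automorphism $\phi\in\mathrm{GL}_2(\Z)$; since $\mc A(\phi(H))=\phi(\mc A(H))$ it then suffices to identify $\phi(H)$ in the list. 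Throughout we use freely that every primitive vector of $\Z^2$ is $\phi(1,0)$ for a suitable $\phi\in\mathrm{GL}_2(\Z)$ and extends to a basis of $\Z^2$, that the shears $(x,y)\mapsto(x+ky,y)$ and $(x,y)\mapsto(x,y+kx)$, as well as $\sigma_x$ and $-\mathrm{id}$, lie in $\mathrm{GL}_2(\Z)$, and that the lattice directions are dense in the circle.

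\textbf{Case B.} Let $\ell$ be the boundary line of $\overline{\mc C}$; being convex, $\mc C$ contains the open halfplane bounded by $\ell$, and $\mc C\cap\ell$ is a subcone of $\ell$ through $O$, hence $\{O\}$, a half-line, or all of $\ell$. If $\ell$ is rational I map it onto the $x$-axis, so that (possibly after $\sigma_x$) $\phi(H)$ becomes $\Z\times\N_0$ (B1), $(\Z\times\N)\cup(\N_0\times\{0\})$ (B2), or $(\Z\times\N)\cup\{O\}$ (B3); the properties listed for these three are short direct checks (factoriality of $\Z\times\N_0$ with unit group $\Z\times\{0\}$ and atom class $\Z\times\{1\}$; $\mc A=\{(1,0)\}$ together with the splitting $(x,y)=(x-1,y)+(1,0)$ in B2, which shows no element of positive second coordinate is an atom or a sum of atoms; atoms $(n,1)$, half-factoriality and the BF-property, failure of FF via $(0,2)=(n,1)+(-n,1)$, and primariness from Proposition~\ref{Pr9}.\textbf{d)} in B3). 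If $\ell$ is irrational then $\ell\cap\Z^2=\{O\}$, so $H$ is unaffected by $\mc C\cap\ell$, and a suitable $\phi$ gives $\phi(H)=\{(x,y)\in\Z^2\mid y\le\alpha x\}$ with irrational $\alpha>0$ (B4); this monoid is reduced (if $\pm h\in\phi(H)$ then $y=\alpha x$, so $h=O$), and for $h=(x_0,y_0)\ne O$ the set $\phi(H)\cap\bigl(h-\phi(H)\bigr)=\{(X,Y)\in\Z^2\mid\alpha X-(\alpha x_0-y_0)\le Y\le\alpha X\}$ is an infinite slab of positive width (as $\alpha x_0-y_0>0$ since $h$ is not on the irrational line), so it contains some $p\notin\{O,h\}$ and $h=p+(h-p)$ is a proper factorization; thus $\mc A(\phi(H))=\emptyset$.

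\textbf{Case C, at least one rational boundary ray.} Map such a ray onto the positive $x$-axis. The other boundary ray is not parallel to it, so after $\sigma_x$ (if needed) its direction is $(a,b)$ with $b>0$; a shear $(x,y)\mapsto(x+ky,y)$ with $k$ large sends it to $(a+kb,b)$ with $a+kb>0$ while fixing the $x$-axis. Hence $\phi(\mc C)$ is the wedge $\{0\le y\le\alpha x\}$ with $\alpha=b/(a+kb)>0$, rational exactly when the second ray is, and — depending on which of the two boundary rays belong to $\mc C$ — $\phi(H)$ is one of $M_\alpha,M_\alpha^\circ,M_{\alpha,>0},M_{\alpha,>0}^\circ$ (case C1), whose atoms are given by Theorems~\ref{Th5}, \ref{Th6}, \ref{Th7}.

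\textbf{Case C, both boundary rays irrational.} The goal is a lattice basis $\{r,s\}$ of $\Z^2$ with $r$ in the interior of $\mc C$ and the line $\R s$ meeting $\overline{\mc C}$ only in $O$. Taking $\phi\in\mathrm{GL}_2(\Z)$ with $\phi(r)=(1,0)$ and $\phi(s)=(0,1)$, the cone $\phi(\mc C)$ then lies in the closed right halfplane with the positive $x$-axis in its interior, so it is bounded by a ray of some positive slope $\alpha$ and a ray of some negative slope $-\beta$, $\alpha,\beta>0$; cutting $\phi(\mc C)$ along the $x$-axis gives $\phi(H)=H_1\cup\sigma_x(H_2)$ with $H_1\in\{M_\alpha,M_\alpha^\circ\}$ and $H_2\in\{M_\beta,M_\beta^\circ\}$ (case C2). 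Producing such a basis is the heart of the proof and the only part that is not routine: a line through $O$ missing $\overline{\mc C}\setminus\{O\}$ exists because the complement of the wedge has angle $>\pi$ and hence contains a whole line, these lines form an open arc of directions containing rational ones, and after a preliminary $\mathrm{GL}_2(\Z)$-reduction of the pair of slopes — essentially the continued fraction algorithm applied to the wedge, which is exactly where the continued fraction data of the bounding rays enters — one can arrange simultaneously that $s$ points along such a rational direction and that $r$ is a primitive interior vector with $\det(r\mid s)=\pm1$. Finally, since $H_1$ and $\sigma_x(H_2)$ meet only along the $x$-axis, on which $(1,0)$ is the unique atom of either, every atom of $\phi(H)$ with positive (resp.\ negative) second coordinate is already an atom of $H_1$ (resp.\ of $\sigma_x(H_2)$); the converse inclusion uses that an atom $a$ of $M_\alpha$ with $a_2\ge1$ satisfies $\alpha a_1-a_2<\alpha$ — otherwise $a=(1,0)+(a_1-1,a_2)$ would split it — which prevents $a$ from splitting off a summand of negative second coordinate in $\phi(H)$. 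Hence $\mc A(\phi(H))=\mc A(H_1)\cup\sigma_x(\mc A(H_2))$.
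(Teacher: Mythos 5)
Your overall route is essentially the paper's (trichotomy on the shape of $\mc C$, normalization by $\mathrm{GL}_2(\Z)$, reduction to the monoids of Section 3, and an atom-union argument for C2), but there is a genuine gap at the step you yourself call ``the heart of the proof'': the existence, in case C2, of a lattice basis $\{r,s\}$ of $\Z^2$ with $r$ in the interior of $\mc C$ and $\R s\cap\overline{\mc C}=\{O\}$. You only assert this (``after a preliminary $\mathrm{GL}_2(\Z)$-reduction \dots\ essentially the continued fraction algorithm applied to the wedge''), and the order of choices you suggest does not work: it is \emph{not} true that an arbitrary rational direction $s$ in the open arc of lines missing $\overline{\mc C}$ admits a completion $r$ with $\det(r\mid s)=\pm1$ lying in $\mathrm{int}(\mc C)$. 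For example, take $s=(0,1)$ and let $\mc C$ be a thin wedge in the right halfplane whose two (irrational) bounding slopes both lie strictly between $100$ and $101$: every $r$ with $\det(r\mid s)=\pm 1$ has first coordinate $\pm 1$, and no lattice point $(\pm1,m)$ lies in the open wedge, so no admissible $r$ exists for this $s$. Likewise, fixing an interior primitive $r$ first and varying its completions $s\in s_0+\Z r$ need not produce a transversal line, since those directions accumulate only at the direction of $r$. So the pair $(r,s)$ must be produced \emph{together}, and this is exactly where the continued fractions must actually be used rather than invoked: the paper takes two consecutive convergents $\eps(q_n,p_n)$, $\eps(q_{n+1},p_{n+1})$ to the slope of an irrational bounding ray; they are unimodular by \eqref{eq2}, and for $n$ large enough the outer one spans a line meeting $\overline{\mc C}$ only in $O$ (here one uses that the opening angle is $<\pi$), while the inner one is an interior point. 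This yields the automorphism explicitly; without some such argument your reduction to C2 is unproved.

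The rest is sound and close to the paper. Cases A, B1--B3 and C1 are handled as in the paper; your final identity $\mc A(\phi(H))=\mc A(H_1)\cup\sigma_x(\mc A(H_2))$ is a correct variant of the paper's argument (your inequality $\alpha a_1-a_2<\alpha$ for an atom with $a_2\ge 1$ does rule out a summand with negative second coordinate). Two smaller points deserve a line of justification: in B4 the assertion that the slab $\{\alpha X-(\alpha x_0-y_0)\le Y\le\alpha X\}$ contains a lattice point other than $O$ and $h$ is precisely the nontrivial fact, and should be proved, e.g.\ via the convergents as in the paper ($0<\alpha q_n-p_n<1/q_{n+1}$ for even $n$); and in B4 you should say why a suitable $\phi$ puts an arbitrary irrational halfplane into the stated form (the paper uses powers of the rotation $\binom{0\ -1}{1\ \ 0}$).
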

\smallskip

\begin{Rem1} 
The fact that $\text{\rm cone}(H) \subset \R^s$ is bounded by (at most) two rays for dimension $s=2$ makes the characterization result of Theorem \ref{Th10} feasible. A generalization to dimensions $s \ge 3$ requires completely different ideas and methods. 
\end{Rem1}
\medskip

\begin{proof} 
Let $\mc C \subset \R^2$ be any convex cone with apex $O$, which spans $\R^2$, and put $H = \mc C \cap \Z^2$.\smallskip 

\textbf{A. } If $\mc C = \R^2$, we have $H=\Z^2$, which is a group.
\medskip

\textbf{B. } Suppose that $\mc C$ is a halfspace of $\R^2$. These monoids are studied in great detail in Section 5 of \cite{C-O}.\smallskip

If the boundary of $\mc C$ is a line defined over $\Q$, it is given by $\{(x,y) \in \R^2 \mid qx-py=0\}$, where $(p,q) \in \Z^2 \setminus \{O\}$ with $\gcd (p,q) =1$. So one can find $(a,b)\in \Z^2$ with $pb + qa = 1$. 
If $(-a,b) \in H$, we consider the automorphism $\phi$ of $\Z^2$ given by $A = \binom{\ b\ a} {-q\ p}$, otherwise we have $(a,-b) \in H$ and define $\phi$ by $A = \binom{b\ \ a} {q\ -p}$. In any case, $\phi (\mc C)$ is the upper half-plane, and depending whether both, one or none bounding rays belong to $\mc C$ (and eventually use the reflection at the $y$-axis), we conclude that $H$ is isomorphic to (exactly) one of monoids $H'$ as given in \textbf{B1 - B3}.\newline
The properties of these monoids are well known, see e.g. \cite{C-O}, or the reader may check them easily. Let us just state, that in case \textbf{B3} any $(x,y)\in H$ with $y>0$ has infinitely many different representations as a sum of atoms, but each consists of exactly $y$ summands (so $H$ is half-factorial).
\smallskip
\pagebreak

Now suppose that the boundary of $\mc C$ is an irrational line (i.e. is not defined over $\Q$). If $\mc C$ is not contained in 
$\{(x,y) \in \Z^2 \mid y \le \alpha x\}$ for some $0 < \alpha \in \R \setminus \Q$, we apply $i$ times the rotation $\tau$, given by $\binom{0\ -1} {1\ \ 0}$, with some suitable $i \in \{1,2,3\}$ to obtain $H'$ as given in \textbf{B4}.\newline 
Now let $H'$ be given as in \textbf{B4}, and we will prove that $H'$ has no atoms. Take any $O \ne h = (x,y) \in H'$, so $y < \alpha x$. By Lemma \ref{L3} and \eqref{eq2} we have for all convergents $\frac {p_n}{q_n}$ of $\alpha$ with even $n$
\[0 < \alpha q_n - p_n < \frac 1 {q_{n+1}}\,,
\]
so we can find some even $n$ with $0 < \alpha q_n - p_n < \frac 1 {q_{n+1}} < \alpha x - y$. One can easily check that $h' = (q_n, p_n)$ and $h''=h-h'$ are both non-zero elements of $H'$, thus $h = h' + h''$ shows that $h$ is not an atom.
\medskip

\textbf{C. } Now suppose that $\mc C$ is bounded by two rays with a positive angle less that $\pi$.
\smallskip

If (at least) one of the rays is defined over $\Q$, we can apply the same ideas as in the first part of the proof of \textbf{B} to find an automorphism $\phi$ of $\Z^2$ such that $\phi (\mc C)$ has the positive $x$-axis as one bounding ray and the other ray lies in the upper half plane. If this other ray does not have a positive slope, we can find some $n \in \N$ with $(-n, 1) \not\in \mc C$ and apply the automorphism $\phi'$ given by $\binom{\ 1\ n+1} {0\ \ 1}$ to obtain $\phi' (\phi (H)) \in \{M_\alpha, M_\alpha^\circ, M_{\alpha, >0}, M_{\alpha, >0}^\circ\}$ with some $0 < \alpha \in \R$.
\smallskip

If (at least) one of the bounding rays of $\mc C$ is irrational, we consider the convergents to the slope of that ray. We can find some (large enough) $n \in \N$ and $\eps \in \{1,-1\}$ with $(q,p) = \eps (q_n, p_n) \not\in \mc C$, such that 
$\{(x,y) \in \R^2 \mid px-qy=0\} \cap \mc C = \{O\}$ and $(q',p') = \eps (q_{n+1}, p_{n+1})$ is an interior point of $\mc C$. Note that we have $pq' - p'q = \delta \in \{1,-1\}$.\newline 
Now we apply the automorphism $\phi$ of $\Z^2$ given by $\delta \binom{\ p\ -q} {-p'\ \ q'}$, which maps the above line, which meets $\mc C$ only in $\{O\}$, to the $y$-axis, and $(q',p')$ to $(1,0)$, which is an interior point of $\phi (\mc C)$. Thus $\phi(H)$ equals some $H'$ as given in \textbf{C2}.
\smallskip

\noindent Now let $H'$ be given as in \textbf{C2}. We finally have to show that $\mc A (H') = \mc A (H_1) \cup \sigma_x(\mc A (H_2))$. Clearly, each atom of $H'$ must be also an atom of $H_1$ or of $\sigma_x(H_2)$.\newline 
Now suppose that $(x,y) \in \mc A (H_1)$ is not an atom of $H'$. Then for $i=1,2$ we have $O \ne (x_i, y_i) \in H'$, not both in $H_1$, with $(x,y) = (x_1,y_1) + (x_2,y_2)$ and $x_1, x_2 > 0$. Supposing $y_2<0$, we have $y=y_1 + y_2 \ge 0$ and obtain $(x,y) = (x_1,y_1+y_2) + (x_2,0)$ as a sum of two elements of $H_1 \setminus \{O\}$, a contradiction.\newline 
The proof for $(x,y) \in \sigma_x (\mc A (H_2))$ is analogous.
\end{proof}
\bigskip

\begin{Rem}~

{\textbf 1.} Note that in Theorem \ref{Th10} in cases {\textbf B4}, {\textbf C1} and {\textbf C2} there are infinitely many monoids $H'$ which are isomorphic to the given $H$.\newline
Furthermore, cases {\textbf C1} and {\textbf C2} do not exclude each other, but both are necessary. If both slopes of the bounding rays of $\mc C$ are irrational, $H$ cannot be isomorphic to any $H'$ of case {\textbf C1}. Since in case {\textbf C2} any monoid $H'$ has at least $3$ atoms, $H$ cannot be isomorphic to such $H'$ if (the complete integral closure of) $H$ has only 2 atoms, i.e. is factorial. 
\smallskip

{\textbf 2.} We can also use Theorem \ref{Th10} to describe the atoms of any completely integrally closed monoid $H$ with $\mc Q(H) = \Z^2$. By Proposition \ref{Pr9}.b) we may restrict to cones $\mc C$ with $\mc C \cap \Z^2 = \overline{\mc C} \cap \Z^2$. So only the cases {\textbf A}, {\textbf B1}, {\textbf B4}, {\textbf C1} with $H'=M_\alpha$ and {\textbf C2} with $\alpha$, $\beta$ both irrational remain.
\smallskip

{\textbf 3.} Instead of using automorphisms of $\Z^2$ one could directly obtain the atoms of $H$ from the (second) convergents to the slopes of the bounding rays of $\text{\rm cone}(H)$. Then one has to modify the results of Theorems \ref{Th5} and \ref{Th6} accordingly, since one has to consider also negative slopes and to choose ``the right side" of the approximations (i.e. either even or odd indices $n$ of the (second) convergents). This would lead to an abundance of different cases, which we wanted to avoid.\newline 
Finally, it might be interesting to note that the operation of $A \in \text{GL}_2 (\Z)$ changes the slope $\alpha$ to $\frac {a+b\alpha} {c+d\alpha}$, whose continued fraction expansion differs from that of $\alpha$ only in finitely many partial quotients.
\end{Rem}
\bigskip

\textbf{Acknowledgment:} The author wants to thank Alfred Geroldinger for many discussions and also to thank the unknown referee for his/her careful reading of the manuscript. The suggestions and comments of both of them helped very much to improve the quality of this paper.
\bigskip
\medskip

\end{document}